\theoremstyle{definition}
\newtheorem{theorem}{Theorem}
\newtheorem{definition}[theorem]{Definition}
\newtheorem{corollary}[theorem]{Corollary}
\newtheorem{example}[theorem]{Example}
\title{The modal logic of Reverse Mathematics}
\author{Carl Mummert}
\date{\today}
\address{C. Mummert, A. Saadaoui: Marshall University\\ 1 John Marshall Drive\\Huntington,~WV~25755}
\email{mummertc@marshall.edu}
\author{Alaeddine Saadaoui}
\author{Sean Sovine}
\address{S. Sovine: U.S. Army Corps of Engineers}
\keywords{Reverse mathematics, modal logic, strict implication, automated reasoning}
\subjclass[2000]{03B30, 03B45}
\begin{document}

\begin{abstract}
The implication relationship between subsystems in Reverse Mathematics has an underlying logic, which can be used to deduce certain new Reverse Mathematics results from existing ones in a routine way.
We use techniques of modal logic to formalize the logic of Reverse Mathematics into a system that we name s-logic. We argue that s-logic captures precisely the ``logical'' content of the implication and nonimplication relations between subsystems in Reverse Mathematics. We present a sound, complete, decidable, and compact tableau-style deductive system for s-logic, and explore in detail two fragments that are particularly relevant to Reverse Mathematics practice and automated theorem proving of Reverse Mathematics results.
\end{abstract}

\maketitle

\section{Introduction}

Reverse Mathematics is a research area in mathematical logic focusing on relationships between subsystems of second-order arithmetic~\cite{Simpson1999}. Here a \emph{subsystem} is simply a consistent theory in the language $L_2$ of second order arithmetic. In a typical result, a researcher focuses on two subsystems $S$ and $T$, each of which is rich enough to include a standard base system of axioms. The goal of the research is to show that the subsystem $S$ \emph{implies} the subsystem $T$ (that is, every $L_2$-structure that satisfies $S$ also satisfies $T$) or that the subsystem $S$ does not imply a subsystem $T$ (there is an $L_2$-structure that satisfies $S$ but does not satisfy $T$).  As usual, if $S$ and $T$ are subsystems of second order arithmetic, we write $S \vdash T$ if every $L_2$-structure that satisfies $S$ also satisfies $T$, and $S \not\vdash T$ if there is an $L_2$-structure that satisfies $S$ but does not satisfy~$T$.  Because the completeness theorem for first-order logic applies to second-order arithmetic, it would be equivalent to write $S \vDash T$.

To study the $\vdash$ and $\not \vdash$ relations from a purely logical viewpoint, we will employ a formal \emph{strict implication} symbol $\strictif$ and its formal negation, $\not\strictif$.  We consider a logic, which we call \emph{s-logic}, whose formulas are of the forms $A \strictif B$ and $A \not\strictif B$, where $A$ and $B$ are formulas of propositional logic.  In an intended interpretation of a formula of s-logic, the propositional variables are assigned to subsystems of second-order arithmetic, $\strictif$ is interpreted as $\vdash$, and $\not\strictif$ is interpreted as $\not\vdash$. Our goal is to study the logic of such formulas, giving a sound and complete deductive system and establishing compactness and decidability theorems. 

There has been a significant amount of previous research on the strict implication operator, $\strictif$. This research was initiated by Lewis~\cite{Lewis1914,LL1959} and continued by many others including Barcan~\cite{Barcan1946} and Hacking~\cite{Hacking1963} before being subsumed into the general theory of modal logic. The most common contemporary approach, which we also follow, treats $A \strictif B$ as an abbreviation for the modal formula $\Box(A \to B)$.  We have not found previous research that treats precisely the fragment of modal logic necessary for Reverse Mathematics, however. We are interested in formulas of both forms $A \strictif B$ and $A \not \strictif B$, not only formulas for the first form, as some authors have been. But we are not interested in formulas with nested strict implications, such as $A \strictif (B \strictif C)$, as other authors have been. If we interpret $\strictif$ as $\vdash$ in a formula of that sort, the inner strict implication must be replaced by a formalized provability predicate, and we would arrive at a provability logic somewhat related to the one studied by Solovay~\cite{Solovay1976}. We are interested only in the logic of the actual provability relation, $\vdash$, and thus we wish to avoid formulas in which strict implications and nonimplications are nested.

Although our motivation for studying s-logic comes from Reverse Mathematics, s-logic may also be applied to other areas of mathematics. As a concrete example, one could identify propositional variables with properties that an arbitrary topological space may possess, interpret $S \strictif T$ to mean that every space with property $S$ has property $T$, and  interpret $S \not\strictif T$ to mean there is a space with property $S$ that does not have property $T$.  The logic corresponding to this topological interpretation of $\strictif$ and $\not\strictif$ will be the same as the logic for the Reverse Mathematics interpretation.  It is easy to think of additional interpretations for which the same logic is obtained. 

One intended application of our research is in automated theorem proving of Reverse Mathematics results.  While many Reverse Mathematics results require original arguments, there are other results implicit in the literature that are obtained by routine combination of results from several papers. Thus, as the volume of research in Reverse Mathematics continues to increase, it can be tedious to determine whether a particular question has been implicitly resolved. A website known as the Reverse Mathematics Zoo, maintained by Damir D. Dzhafarov, contains a list of many Reverse Mathematics results from the literature, and uses these to automatically deduce some of the additional Reverse Mathematics results implicit in the known ones.  We hope that a more complete understanding of the underlying logic will help the development of such systems. The results of the final section, in particular, deal with fragments of s-logic that are relevant to automated analysis of the Reverse Mathematics literature. 

The research presented here was initiated by the third author in an undergraduate research project and continued by the second author as a master's thesis. The first author supervised both of these projects. The first and second authors then extended the results to their present form.  

\section{Reverse mathematics, modal logic, and s-logic}\label{sec2}

In this section, we present and justify the syntax and semantics of s-logic, and establish a semantic compactness theorem.   The syntax begins with a choice of an alphabet of propositional variables. In our intended interpretations, each propositional variable will represent a subsystem of second-order arithmetic. 

\begin{definition}
A \emph{signature} for s-logic consists of a infinite (possibly uncountable) set $\Sigma$ of propositional variables along with the non-variable symbols `(', `)', `$\land$', `$\lor$',  `$\to$', `$\lnot$', `$\strictif$', and `$\not\strictif$'. 
\end{definition}

For the remainder of the paper, we will assume that some particular signature has been fixed.

\begin{definition}[s-formulas]
The \emph{propositional formulas} are the smallest set of formulas such that:
\begin{enumerate}
\item Each propositional variable is a propositional formula.
\item If $A$ is a propositional formula, so is $\lnot A$.
\item If $A$ and $B$ are propositional formulas, so are $(A \land B)$, $(A\lor B)$, and $(A \to B)$.
\end{enumerate}

An \emph{s-formula} is of the form $A\strictif B$ or $A \not\strictif B$, where $A$ and $B$ are propositional formulas.  A formula of the form $A \strictif B$ is a \emph{strict implication}, while a formula of the form $A \not\strictif B$ is a \emph{strict nonimplication}.  An \emph{s-theory} is an arbitrary set of s-formulas.
\end{definition}


To motivate our choice of semantics, consider an $L_2$-structure $M$. If each propositional variable is associated with a subsystem, we may form a valuation $w_M\colon \Sigma\to \{T,F\}$ by putting $w_M(X) = T$ if and only if $
M \vDash X$.  Of course, if $X \vdash Y$, then $M$ will satisfy $X \to Y$. But, if $X \not\vdash Y$, a particular $L_2$ structure $M$ might still satisfy $X \to Y$. In particular, all the subsystems normally considered in Reverse Mathematics are true in the standard model of second-order arithmetic. In general, to have valuations that witness the consistency of strict nonimplications, we will need to look at a semantics that uses sets of valuations, which we call frames. 

If $\mathcal{M}$ is a set of $L_2$-structures, we may form the associated frame $\{w_M : M \in \mathcal{M}\}$. Under the definitions we will give, this frame will satisfy an s-formula $A \strictif B$ if every structure in $\mathcal{M}$ satisfies $A \to B$, and will satisfy $A\not\strictif B$ if there is a structure in $\mathcal{M}$ that satisfies $A$ and does not satisfy $B$.  Frames of this kind, which are arise from sets of $L_2$-structures, are the intended interpretations of s-logic.   

Our goal, however, is to reason in a \textit{logical} manner about the relationships between subsystems, in a way that is compatible with our limited knowledge at each moment of time. At each moment, a researcher knows about a particular set of $L_2$-structures, but does not know about all $L_2$-structures. Moreover, for each $L_2$-structure $M$ that has been studied, the researcher knows the truth values within $M$ of particular subsystems, but does not know the truth values of all subsystems. For example, there are some subsystems whose consistency is an open problem. If $X$ is such a subsystem, the researcher must consider for the sake of logical analysis both valuations that make $X$ true and ones which make $X$ false, as long as these valuations are consistent with all other known results. This analysis leads to a very general semantics for s-logic, with a constructive character. 

\begin{definition}[Valuations and frames]
A \emph{valuation} is a function from the set of propositional variables to the set $\{T,F\}$. As usual, each valuation can be extended uniquely to a valuation that assigns a truth value to each propositional formula. 

A \emph{frame} is a nonempty set of valuations.  A strict implication $A \strictif B$ is \emph{satisfied} by a frame $R$ if, for every valuation $w \in R$, $w(A \to B) = T$. This is equivalent to: for every $w \in R$, either $w(A) = F$ or $w(B) = T$. A strict nonimplication $A \not\strictif B$ is satisfied by $R$ if there is at least one valuation $w \in R$ such that $w(A) = T$ and $w(B) = F$.  A frame satisfies an s-theory $\Gamma$ if every formula in $\Gamma$ is satisfied by the frame. 
\end{definition}

The semantics for s-logic uses all possible frames. Although the intended interpretation of $\strictif$ is $\vdash$, they differ in important ways when arbitrary frames are considered.  For example, if each propositional variable from a fixed alphabet is associated with a subsystem of second-order arithmetic, and a frame $R$ satisfies a given set $\Gamma$ of s-formulas on that alphabet, there may not be a set of $L_2$-structures $\mathcal{M}$ with $R = R_\mathcal{M}$, because there may be relationships between the subsystems that are not stated in $\Gamma$. For example, if $A$ and $B$ are subsystems such that $A \vdash B$, then every frame of the form $R_{\mathcal{M}}$ satisfies $A \strictif B$; but $\Gamma$ may not contain $A \strictif B$ and $R$ may not satisfy that formula.  Similarly, if $A$ and $B$ are subsystems such that $A \not \vdash B$, a frame of the form $R_\mathcal{M}$ will satisfy $A\not\strictif B$ if and only if there is an $L_2$-structure in $\mathcal{M}$ that satisfies $A$ and does not satisfy $B$.  

These differences are to be expected. If we translate several Reverse Mathematics results into a set of s-formulas, and then formally derive consequences from these formulas, we cannot expect to derive all possible Reverse Mathematics results, but only the ones that can be proven by looking at the logical structure of formulas, without considering the meanings of the propositional variables within them. In other words, we only expect to formally derive new formulas that are, in a sense, routine combinations of existing formulas.  Similarly, if we begin with only a fixed collection of $L_2$-structures, $\mathcal{M}$, we cannot expect to use formal methods of s-logic to derive the existence of a new $L_2$-structure. Thus we expect that, when we define a deductive system for s-logic, if an s-formula $A \not \strictif B$ can be derived from a set of s-formulas $\Gamma$, then among any collection of $L_2$ structures $\mathcal{M}$ for which $R_\mathcal{M}$ satisfies $\Gamma$, at least one of the structures in $\mathcal{M}$ must satisfy $A$ and not satisfy $B$.

\subsection{Relationship with modal logic}

Although the motivation for our semantics does not directly come from modal logic, our definition of a frame can be viewed as a slight modification of Kripke semantics in modal logic. Under our semantics, an s-formula $A \strictif B$ corresponds exactly to the modal $\Box(A \to B)$, where $\Box \phi$ holds in a frame if and only if $\phi$ holds in all valuations of the frame. However, because we are not interested in formulas with nested modal operators, we have no need for an accessibility relation in our definition, and we do not require the full forcing relation $\Vdash$. For readers accustomed to modal logic, our system can be viewed as analogous to a fragment of S5, in that a strict implication or strict nonimplication is ``visible'' from every world (valuation) in the frame.

We could thus employ a general deductive system for modal logic (such as S5) to study s-logic. There are several disadvantages to that approach, which lead us to reject it. The first is that we look for a deductive system whose intensional aspects match the intended interpretation more closely. A proof in S5 may require significant reinterpretation to be read as a result of reverse mathematics, but the deductive systems we will present match the intension of the intended interpretation, so that a proof in these systems is easily read as a proof in the usual style of Reverse Mathematics. The second disadvantage is that general modal logic includes formulas with nested modal operations, such as $\Box(A \to \Box(\lnot B))$. Such formulas have no place in the intended interpretation, because we seek to interpret $\strictif$ as the actual provability relation, not as a formalized provability relation.

\subsection{Compactness of s-logic}

In the next section we will establish a sound and complete deductive system for s-logic. As a preliminary result, we first establish a semantic compactness theorem which will be useful in our later proofs.

\begin{theorem}[Compactness] If every finite subset of an s-theory is satisfiable, then the entire s-theory is satisfiable.
\end{theorem}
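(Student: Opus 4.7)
The plan is to reduce to the classical compactness theorem for propositional logic. Let $\Gamma$ be an s-theory in which every finite subset is satisfiable by some frame, and split $\Gamma = \Gamma^+ \cup \Gamma^-$ into its strict implications and strict nonimplications. Write $\widehat{\Gamma^+}$ for the propositional theory $\{A \to B : (A \strictif B) \in \Gamma^+\}$ obtained by stripping the modalities.

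For each $(A \not\strictif B) \in \Gamma^-$, I would first show that the propositional theory $T_{A,B} := \widehat{\Gamma^+} \cup \{A, \lnot B\}$ is finitely satisfiable in the ordinary propositional sense. Given a finite $F \subseteq T_{A,B}$, let $(C_1 \strictif D_1), \ldots, (C_k \strictif D_k)$ enumerate the finitely many members of $\Gamma^+$ whose propositional translations occur in $F$, and consider $\Delta := \{C_1 \strictif D_1, \ldots, C_k \strictif D_k, A \not\strictif B\} \subseteq \Gamma$. By hypothesis $\Delta$ is satisfied by some frame, which must contain a valuation $w$ with $w(A) = T$, $w(B) = F$, and $w(C_j \to D_j) = T$ for each $j$; this $w$ is then a propositional model of $F$.

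Applying propositional compactness, each $T_{A,B}$ has a model $w_{A,B}$. Set $R := \{w_{A,B} : (A \not\strictif B) \in \Gamma^-\}$. By construction every $w_{A,B}$ makes $A \to B$ true for all $(A \strictif B) \in \Gamma^+$, so $R$ satisfies every formula in $\Gamma^+$, while the designated $w_{A,B}$ witnesses each nonimplication in $\Gamma^-$. The only edge case is $\Gamma^- = \emptyset$, where $R$ as defined would be empty, violating the nonemptiness requirement on frames. In that situation the same reduction shows $\widehat{\Gamma^+}$ itself is propositionally finitely satisfiable (any finite subset corresponds to a finite $\Delta \subseteq \Gamma^+$, whose satisfying frame is nonempty and supplies a valuation), so by propositional compactness some $w$ models $\widehat{\Gamma^+}$ and I can take $R = \{w\}$.

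There is no deep obstacle here; the proof is essentially a bookkeeping reduction to classical propositional compactness. The only mild subtlety is the nonemptiness clause in the definition of a frame, which forces the separate treatment of the case in which $\Gamma$ contains no strict nonimplications.
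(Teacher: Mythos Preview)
Your argument is correct, and it takes a genuinely different route from the paper. The paper invokes the standard translation of modal logic into first-order logic: one introduces a domain of worlds, a unary predicate $P_X$ for each propositional variable $X$, translates $A\strictif B$ as $\forall w\,(A^*(w)\to B^*(w))$ and $A\not\strictif B$ as $\exists w\,(A^*(w)\land\lnot B^*(w))$, observes that satisfiability is preserved in both directions, and then appeals to first-order compactness. Your proof bypasses first-order logic entirely: you peel off one propositional theory $\widehat{\Gamma^+}\cup\{A,\lnot B\}$ for each strict nonimplication, show each is finitely satisfiable using the hypothesis on $\Gamma$, apply propositional compactness to each separately, and assemble the resulting valuations into a frame. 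What your approach buys is a more elementary and self-contained argument (propositional compactness suffices; no first-order machinery is needed) together with an explicit description of the satisfying frame. What the paper's approach buys is brevity and conceptual uniformity with the general modal-logic literature; the standard translation handles all s-formulas at once without the case split on $\Gamma^-=\emptyset$.
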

\begin{proof}
The proof uses the so-called ``standard interpretation'' of modal logic into first-order logic~\cite{BB2007}. This interpretation converts each s-formula into a first-order formula in such a way that an s-theory is satisfiable if and only if the corresponding first-order theory is satisfiable. The compactness theorem for s-logic then follows immediately from the compactness theorem for first-order logic.
%
\end{proof}

The proof of the compactness theorem suggests that we could also form a deductive system for s-logic by interpreting s-logic into first-order logic. The deductive systems for first-order logic are even farther from the intended interpretation of s-logic, however.


\section{Tableau system}

Our first inference system is inspired by the system of Mints~\cite{Mints92}.
It is a refutational system in the unsigned tableau style.  One motivation for this type of derivational system is that the proof (refutation) technique closely matches the way that a researcher in Reverse Mathematics might analyze a routine combination of results. Moreover, it is known in the automated theorem proving community that software-generated tableaux can be effectively converted into natural-language prose proofs of their results. 

For convenience, we use a slightly different set of formulas to label the nodes of a tableau.  We first fix a \emph{world alphabet}, which is an infinite set of variables that can be used to symbolize worlds (valuations) in a hypothetical frame. 

\begin{definition} Let $W$ be a fixed world alphabet. The \emph{tableau formulas} consist of all strict implication and strict nonimplication formulas, and all expressions of the form $(A,w)$, where $A$ is a propositional formula and $w\in W$. 
\end{definition}

\begin{definition} A \emph{tableau} for a set $\Gamma$ of tableau formulas is a finite tree $T$, with each node labeled by a (possibly infinite) set of tableau formulas, such that the root of $T$ is labeled with $\Gamma$ and each non-root node is obtained from its parent by one of the tableau inference rules in Figure~\ref{fig1}.  Here, when the rule $\not\strictif$ is applied, $v$ must be an element of $W$ that is not mentioned in the ancestor nodes of the node where the rule is being applied. When the rule $\strictif$ is applied, $w$ may be any element of $W$. 

A branch (path) through a tableau is \emph{closed} if it contains a node for which the label contains both $(A,w)$ and $(\lnot A,w)$ for some propositional formula $A$ and some $w \in W$.  A tableau is closed if every maximal branch is closed. 
\end{definition}

Intuitively, the labels on each node of a tableau represent assertions about a possible frame. A strict implication is asserted to hold in all valuations of the frame; a strict nonimplication is asserted to hold in some, unspecified, valuation; and a tableau formula $(A,w)$ asserts that $A$ holds in valuation $w$. 

\begin{figure}
\begin{center}
\begin{tabular}{lcl}
     \AxiomC{$\Gamma, (A,w)$}
     \AxiomC{$\Gamma, (B,w)$}
     \LeftLabel{$\lor$\quad}
     \BinaryInfC{$\Gamma, (A\lor B,w)$}
\DisplayProof
&
\parbox{0.5in}{\hskip1em}
&
     \AxiomC{$\Gamma, (\lnot A,w), (\lnot B,w)$}
     \LeftLabel{$\lnot \lor$\quad}
     \UnaryInfC{$\Gamma, (\lnot(A\lor B),w)$}
\DisplayProof
\\[3em]
     \AxiomC{$\Gamma, (A,w), (B,w)$}
     \LeftLabel{$\land$\quad}
     \UnaryInfC{$\Gamma, (A\land B,w)$}
\DisplayProof
&
\parbox{0.5in}{\hskip1em}
&
     \AxiomC{$\Gamma, (\lnot A,w)$}
     \AxiomC{$\Gamma, (\lnot B,w)$}
     \LeftLabel{$\lnot \land$\quad}
     \BinaryInfC{$\Gamma, (\lnot(A\land B),w)$}
\DisplayProof
\\[3em]
     \AxiomC{$\Gamma, (\lnot A,w)$}
     \AxiomC{$\Gamma, (B,w)$}
     \LeftLabel{$\to$\quad}
     \BinaryInfC{$\Gamma, (A\to B,w)$}
\DisplayProof
&
\parbox{0.5in}{\hskip1em}
&
     \AxiomC{$\Gamma, (A,w), (\lnot B,w)$}
     \LeftLabel{$\lnot \to$\quad}
     \UnaryInfC{$\Gamma, (\lnot(A\to B),w)$}
\DisplayProof
\\[3em]
     \AxiomC{$\Gamma, (\lnot A,w)$}
     \AxiomC{$\Gamma, (B,w)$}
     \LeftLabel{$\strictif$\quad}
     \BinaryInfC{$\Gamma, A\strictif B$}
\DisplayProof
&
\parbox{0.5in}{\hskip1em}
&
     \AxiomC{$\Gamma, (A,v), (\lnot B,v)$}
     \LeftLabel{$\not\strictif$\quad}
     \RightLabel{\quad($v$ new)}
     \UnaryInfC{$\Gamma, A\not\strictif B$}
\DisplayProof
\\[3em]
       \AxiomC{$\Gamma, (A,w)$}
     \AxiomC{$\Gamma,(\lnot A,w)$}
     \LeftLabel{$C$\quad}
     \BinaryInfC{$\Gamma$}
\DisplayProof  
&
\parbox{0.5in}{\hskip1em}
& \AxiomC{$\Gamma, (A,w)$}
     \LeftLabel{$\lnot\lnot$\quad}
     \UnaryInfC{$\Gamma, (\lnot\lnot A,w)$}
\DisplayProof
\end{tabular}
\end{center}
\caption{Tableau-style inference rules}\label{fig1}
\end{figure}
\begin{example}\label{ex1}
The following diagram shows a closed tableau using the world alphabet $W = \{ w_1 \}$.
The root node, at the bottom, is labeled with $X \not\strictif Y, X \strictif A, B \strictif Y, A \strictif B$. Each inference is labeled with the corresponding rule from Figure~1. For convenience, formulas on a node are not re-written on the descendants of that node.  The symbol $\otimes$ indicates a closed branch.
\begin{center}
\begin{prooftree}
\renewcommand{\extraVskip}{4pt}
\AxiomC{$\otimes$}
\noLine
\UnaryInfC{$(\lnot X, w_1)$} 

\AxiomC{$\otimes$}
\noLine
\UnaryInfC{$(\lnot A, w_1)$} 

\AxiomC{$\otimes$}
\noLine
\UnaryInfC{$(B, w_1)$} 

\LeftLabel{$\strictif$}
\BinaryInfC{$(\lnot B, w_1)$}  

\AxiomC{$\otimes$}
\noLine
\UnaryInfC{$(Y, w_1)$} 

\LeftLabel{$\strictif$}
\BinaryInfC{$(A, w_1)$} 

\LeftLabel{$\strictif$}
\BinaryInfC{$(X, w_1), (\lnot Y, w_1)$} 

\LeftLabel{$\not\strictif$}
\UnaryInfC{$X \not\strictif Y, X \strictif A, B \strictif Y, A \strictif B$} 
\end{prooftree}
\end{center}
\vskip1em

The reason that only one symbol is needed in the world alphabet in this deduction is that there is only one nonimplication formula listed at the root of the tableau.
\end{example}

\begin{theorem}[Soundness]\label{thm:sound}
Suppose that there is a tableau for a set $\Gamma$ of s-formulas such that every branch of $\Gamma$ is closed. Then no frame can satisfy~$\Gamma$.
\end{theorem}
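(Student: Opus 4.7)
The plan is to proceed by the contrapositive: if the root label $\Gamma$ of the tableau is satisfiable in the sense of Section~\ref{sec2}, then at least one maximal branch fails to be closed. To carry this out I would introduce a notion of satisfiability for the label of an arbitrary tableau node (which may contain both s-formulas and world-indexed formulas $(A,w)$) and verify that each inference rule of Figure~\ref{fig1} passes satisfiability from the parent (conclusion) down to at least one child (premise).

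Concretely, I would say a set $\Delta$ of tableau formulas is \emph{satisfiable} if there exist a frame $R$ and a map $\iota$, defined at every world variable $w$ occurring in any formula $(A,w) \in \Delta$ and sending each such $w$ to an element of $R$, such that (i) every strict implication and strict nonimplication in $\Delta$ is satisfied by $R$, and (ii) $\iota(w)(A) = T$ whenever $(A,w) \in \Delta$. Since the root label $\Gamma$ contains no world-indexed formulas, satisfiability of the root in this auxiliary sense coincides with satisfiability of $\Gamma$ as an s-theory.

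The main lemma to prove is then: for each tableau rule, if the conclusion is satisfiable by some $(R,\iota)$, then at least one premise is satisfiable by some $(R,\iota')$ with $\iota \subseteq \iota'$. The boolean and $\lnot\lnot$ cases are immediate from the propositional semantics. For the $\strictif$ rule, the applier's chosen world $w$ is either already in the domain of $\iota$ or can be extended arbitrarily into $R$; in either case, $R$ satisfying $A\strictif B$ forces $\iota(w)(A) = F$ or $\iota(w)(B) = T$, matching one of the two branches. For the cut rule $C$, we split similarly on the value of $\iota(w)(A)$. The $\not\strictif$ rule is the most delicate: $R$ satisfying $A\not\strictif B$ furnishes a witness valuation $v^{\ast} \in R$ with $v^{\ast}(A) = T$ and $v^{\ast}(B) = F$, and the side condition that $v$ be new guarantees that $\iota$ has no prior commitment at $v$, so we may safely set $\iota'(v) = v^{\ast}$ without disturbing clause (ii) for any formula already in the label.

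Given the lemma, a straightforward induction down the tree produces, from any frame satisfying the root, a branch from the root to some leaf in which every node is satisfiable. But any closed node contains both $(A,w)$ and $(\lnot A,w)$ for some $A$ and $w$, and no single valuation can make both true; hence closed nodes are never satisfiable. Therefore if every branch of the tableau is closed, $\Gamma$ admits no satisfying frame, which is the claim. The one point that needs genuine care is the bookkeeping around the $\not\strictif$ rule: the freshness side condition is precisely what is needed to guarantee that the inductive extension of $\iota$ is well defined and consistent with the existing labels.
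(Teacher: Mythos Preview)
Your proposal is correct and follows essentially the same approach as the paper: an induction on the tableau structure showing that satisfaction of the bottom of each rule passes to at least one top, with world variables interpreted as valuations in the satisfying frame. Your treatment is simply more explicit about the bookkeeping---formalizing the assignment $\iota$ and spelling out why the freshness condition on the $\not\strictif$ rule is exactly what makes the extension of $\iota$ well defined---whereas the paper leaves these details implicit.
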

\begin{proof}
The proof is by induction on the structure of the tableau, with one case for each of the ten tableau rules. For each rule, it can be shown directly that if a frame $R$ satisfies the set of formulas on the bottom of the rule, then the frame also satisfies at least one of the sets of formulas on the top of the rule. Here, each time a new world variable $v$ is introduced at a particular node, $v$ is interpreted on that node and all of its descendants as a particular valuation $w_v$ in $R$, and $R$ satisfies $(A,v)$ if and only if $w_v(A) = T$. 
\end{proof}

%
%

The hypothesis of finiteness in they following theorem is a convenience that will be removed in Theorem~\ref{lemsc}. For applications to automated theorem proving, the finite case is of the most interest. 

\begin{theorem}[Completeness]\label{thm:complete}
Suppose that $\Gamma$ is a finite set of s-formulas such that there is no closed tableau for $\Gamma$. Then there is a frame that satisfies~$\Gamma$. 
\end{theorem}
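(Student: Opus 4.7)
The plan is to construct a single systematically saturated tableau for $\Gamma$, observe that it must be finite, and then read a frame off any open branch that remains. The overall structure mirrors standard completeness proofs for modal tableau systems, adapted to s-logic by the fact that worlds are introduced only from the strict nonimplications in $\Gamma$ and that strict implications in $\Gamma$ must be instantiated at each such world.

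First I would saturate $\Gamma$ in a fixed staged order. Because $\Gamma$ is finite it contains only finitely many strict nonimplications $A_1 \not\strictif B_1, \dots, A_m \not\strictif B_m$; applying the $\not\strictif$ rule to each in turn introduces fresh worlds $v_1,\dots,v_m$ and labels $(A_i,v_i),(\lnot B_i,v_i)$. Let $W' = \{v_1,\dots,v_m\}$. Next, for each strict implication $C \strictif D \in \Gamma$ and each $w \in W'$ apply the $\strictif$ rule, splitting the current branch into one adding $(\lnot C, w)$ and one adding $(D,w)$. Finally, exhaustively apply the propositional decomposition rules ($\land, \lor, \to, \lnot\land, \lnot\lor, \lnot\to, \lnot\lnot$) to every compound label. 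This process terminates: decomposition strictly reduces formula complexity, no new world variables are introduced beyond $W'$, and the set of relevant subformula/world pairs is finite.

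By hypothesis no tableau for $\Gamma$ is closed, so the finite tableau I have built must have at least one maximal branch $B$ on which no pair $(A,w), (\lnot A, w)$ both appear. I then read a frame from $B$: for each $w \in W'$ define a valuation $\nu_w$ by $\nu_w(p) = T$ iff $(p,w)$ appears on $B$ (for propositional variables $p$), choosing arbitrary values for variables whose label on $w$ is undetermined by $B$. Set $R = \{\nu_w : w \in W'\}$. The heart of the argument is a truth lemma proved by induction on the structure of propositional formulas $A$: for each $w \in W'$, if $(A,w)$ occurs on $B$ then $\nu_w(A) = T$, and if $(\lnot A, w)$ occurs on $B$ then $\nu_w(A) = F$. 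The atomic case follows from the definition of $\nu_w$ together with openness of $B$. Each inductive case matches the corresponding saturated tableau rule: e.g.\ from $(A \land B, w)$ on $B$ saturation supplies both $(A,w)$ and $(B,w)$, and from $(A \to B, w)$ on $B$ the child branch selected by $B$ contains either $(\lnot A,w)$ or $(B,w)$; the remaining connectives are symmetric.

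Granted the truth lemma, verifying $R \vDash \Gamma$ is immediate. For each $C \strictif D \in \Gamma$ and each $w \in W'$, the $\strictif$ rule was applied, so $B$ contains either $(\lnot C, w)$ or $(D,w)$, hence $\nu_w(C \to D) = T$; and for each $A_i \not\strictif B_i \in \Gamma$, $\nu_{v_i}$ witnesses the nonimplication. The main obstacle I anticipate is the fairness bookkeeping in the saturation phase: I must ensure that $\strictif$ is instantiated at \emph{every} $w \in W'$, which is precisely why I stage the construction so that all $\not\strictif$ applications (and hence all world introductions) are completed before any $\strictif$ rule is applied, and propositional decomposition is performed last. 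The truth lemma itself is a routine induction, one case per connective.
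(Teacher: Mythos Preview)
Your argument is correct and follows the same overall strategy as the paper: build a systematically saturated tableau, take an open branch, and read a satisfying frame off it. The one substantive tactical difference is that the paper additionally applies the cut rule~$C$ to every propositional formula and world variable appearing on each branch, so that on the terminal node every atom is decided; the valuation is then read off directly and correctness is verified by an induction along the branch from leaf back to root. You instead avoid cut entirely, define $\nu_w$ only from the atomic labels that happen to occur, and recover correctness via a truth lemma proved by induction on formula complexity. Your route is the more common textbook completeness argument and keeps the tableau smaller; the paper's cut-based saturation trades a larger tableau for a trivial model-extraction step and an induction that matches the tableau structure rather than formula structure.

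One small edge case worth flagging (which the paper's proof shares): if $\Gamma$ contains no strict nonimplications then your $W'$ is empty and $R$ fails to be a frame. This is easily patched by introducing a single fresh world (e.g.\ by one application of the $\strictif$ rule, or of rule~$C$, at a fresh $w$) before the propositional saturation.
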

\begin{proof}
Let $\Gamma$ be a finite set of s-formulas. We begin by forming a finite tableau $T$ such that, whenever a formula $A$ appears on a maximal branch, the corresponding tableau rule for $A$ is also applied on that branch, and such that for every propositional formula $A$ and world variable $w$ that appears on a maximal branch, the rule $C$ is applied to that branch using the formula $A$ and world variable~$w$. Such a tableau can be made by repeatedly applying tableau rules in a systematic way until the desired conditions are met, and the resulting tableau will be finite so long as rule $C$ is only applied to a formula $A$ and world variable $w$ that already appear on a branch. 

If there is no closed tableau for $\Gamma$, then in particular $T$ does not close, so there is at least one maximal branch $B$ in $T$ which is not closed.  Then, for every world variable $v$ that appears on $B$, we define a valuation $w_v$. 
For each propositional letter $X$ that appears on $B$, the terminal node of $B$ contains either $(X,v)$ or $(\lnot X,v)$, by construction. Because $B$ is not closed, only one of these cases can occur. We let $w_v(X) = T$ in the former case, and $w_v(X) = F$ in the latter.   Let $R$ be the frame that contains the valuations $w_v$ for all world variables $v$ that appear on~$B$. 

It can then be shown directly by induction from the terminal node of $B$ back to the root that $R$ satisfies the bottom set of formulas in each tableau rule that was used to form the branch $B$. Thus $R$ satisfies the set of tableau formulas at the root of $B$, so $R$ satisfies~$\Gamma$. 
\end{proof}

The notation from the next definition will be used to simplify the statements of several theorems. 

\begin{definition}
The \emph{strict negation} of a s-formula $\phi$, denoted $- \phi$, is defined by cases: $-(A \strictif B)$ is $A \not\strictif B$, and 
 $-(A \not\strictif B)$ is $A \strictif B$.\end{definition}

Unlike the negation symbol $\lnot$, which is part of the language of propositional logic, strict negation is strictly a notation in the metalanguage; the symbol `$-$' is never part of an s-formula. The key property is that a frame satisfies an s-formula $A$ if and only if the frame does not satisfy~$-A$.

We now turn to the issue of characterizing logical consequence in s-logic. 
\begin{definition}
An s-formula $A$ is a \textit{strict consequence} of an s-theory $\Gamma$ if every frame that satisfies $\Gamma$ satisfies $A$. 
\end{definition}

\begin{theorem}\label{lemsc}
An s-formula $A$ is a strict consequence of an s-theory $\Gamma$ if and only if there is a closed tableau for $\Gamma \cup \{-A\}$. 
\end{theorem}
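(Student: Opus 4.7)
The plan is to prove both directions separately, leveraging the compactness theorem to reduce the forward direction to the finite completeness theorem (Theorem~\ref{thm:complete}) and using soundness (Theorem~\ref{thm:sound}) directly for the backward direction. The key conceptual point on both sides is the characterizing property of strict negation: a frame satisfies $A$ iff it does not satisfy $-A$. Consequently, saying that every frame satisfying $\Gamma$ satisfies $A$ is equivalent to saying that no frame satisfies $\Gamma \cup \{-A\}$, which is the bridge to the tableau formulation.

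For the backward direction ($\Leftarrow$), I would argue as follows. Suppose a closed tableau for $\Gamma \cup \{-A\}$ exists. By soundness, no frame satisfies $\Gamma \cup \{-A\}$. Hence any frame $R$ satisfying $\Gamma$ fails to satisfy $-A$, and by the property of strict negation recorded above, $R$ satisfies $A$. So $A$ is a strict consequence of $\Gamma$.

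For the forward direction ($\Rightarrow$), suppose $A$ is a strict consequence of $\Gamma$, so no frame satisfies $\Gamma \cup \{-A\}$. By the compactness theorem proved earlier, some finite subtheory is unsatisfiable; without loss of generality this subtheory contains $-A$, so it has the form $\Gamma_0 \cup \{-A\}$ with $\Gamma_0 \subseteq \Gamma$ finite. Applying Theorem~\ref{thm:complete} in its contrapositive form, there is a closed tableau $T_0$ for $\Gamma_0 \cup \{-A\}$. From $T_0$ I would build a closed tableau $T$ for $\Gamma \cup \{-A\}$ by taking the same underlying tree, relabeling the root as $\Gamma \cup \{-A\}$, and adding the extra formulas in $\Gamma \setminus \Gamma_0$ to every descendant node. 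Inspecting each of the ten inference rules in Figure~\ref{fig1} shows that every rule carries an arbitrary side context through unchanged, so the augmented tree is still a legal tableau; the closure condition at the leaves only concerns the tableau formulas $(A,w), (\lnot A,w)$ already present in $T_0$, so every branch of $T$ remains closed.

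The main technical obstacle is the finite-to-infinite passage in the forward direction. The tableau definition only produces finite trees, so without compactness I would have no way to derive the existence of a closed tableau directly from the semantic hypothesis on an infinite $\Gamma$. Compactness reduces the problem to the finite completeness theorem, but one must then justify that the closed tableau for the finite subset can be extended to one whose root is the full $\Gamma \cup \{-A\}$. This is the step where care is needed to verify that all ten rules are preserved under adjoining an arbitrary set of extra formulas to every node; once this verification is in hand, the argument closes cleanly.
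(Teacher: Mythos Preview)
Your proof is correct and follows essentially the same approach as the paper: soundness for the backward direction, and compactness plus finite completeness for the forward direction, with the final step of enlarging the root label (and all descendant labels) of the closed tableau from the finite subset to the full $\Gamma \cup \{-A\}$. Your presentation is slightly more explicit than the paper's---you spell out the ``without loss of generality $-A$ is in the finite subset'' step and the verification that each rule is stable under enlarging the side context---but these are exactly the details the paper elides with the phrase ``with similar changes to the remaining nodes.''
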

\begin{proof}
If there is a closed tableau for $\Gamma \cup \{-A\}$ then, by the soundness theorem, there is no frame that satisfies $\Gamma \cup \{-A\}$, and thus every frame that satisfies $\Gamma$ satisfies $A$.

For the converse, suppose that every frame that satisfies $\Gamma$ satisfies~$A$. Then no frame satisfies $\Gamma \cup \{-A\}$. By the compactness theorem, this means that there is a finite subset $\Delta$ of $\Gamma \cup \{-A\}$ that is not satisfied by any frame.  By the completeness theorem, there is a closed tableau for $\Delta$. This tableau becomes also a closed tableau for $\Gamma \cup \{-A\}$ if label on the root of the tableau is changed from $\Delta$ to~$\Gamma \cup \{-A\}$, with similar changes to the remaining nodes.
\end{proof}

\begin{example}
In light of Lemma~\ref{lemsc}, the closed tableau in Example~\ref{ex1} shows that $A \not\strictif B$ is a strict consequence of  $\{X\not\strictif Y, X \strictif A, B \strictif Y\}$, and also shows that $X \strictif Y$ is a strict consequence of $\{X \strictif A, A \strictif B, B \strictif Y\}$. In contrast, neither $B \strictif C$ nor $B \not \strictif C$ is a strict consequence of $\{A \strictif B, A \strictif C\}$.
\end{example}

\begin{theorem}[Decidability of s-logic]
Let $V$ be the set of pairs $(\Gamma, A)$ where $\Gamma$ is a finite s-theory, $\phi$ is an s-formula, and $\phi$ is a strict consequence of $\Gamma$. Then, under a standard G\"odel numbering of formulas and finite sets of formulas, the set $V$ is computable.
\end{theorem}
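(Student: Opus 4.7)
The plan is to decide membership in $V$ by running the saturated tableau construction from the proof of Theorem~\ref{thm:complete} and reading the answer off the resulting finite tree. By Theorem~\ref{lemsc}, $A$ is a strict consequence of a finite s-theory $\Gamma$ if and only if $\Gamma \cup \{-A\}$ admits a closed tableau, so it suffices to give a computable procedure that, on input $(\Gamma, A)$, decides whether such a closed tableau exists.

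First I would fix the search space in advance. Let $\Phi$ be the (finite) set of propositional subformulas of formulas appearing in $\Gamma \cup \{-A\}$, and let $W_0$ be a set containing one fresh world variable for each strict nonimplication in $\Gamma \cup \{-A\}$. Because the $\not\strictif$ rule is the only source of new worlds and need be applied at most once per strict nonimplication, the algorithm will never need to consider any world variable outside $W_0$.

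Next I would describe the algorithm concretely: carry out the saturated tableau construction from the completeness proof in a fixed order — first apply each $\not\strictif$ rule using its pre-allocated variable from $W_0$, then exhaustively apply the decomposition rules, then the $\strictif$ rule for every pair consisting of a strict implication in $\Gamma \cup \{-A\}$ and a $v \in W_0$, and finally the cut rule $C$ for each pair $(X,v)$ with $X$ a propositional letter of $\Phi$ and $v \in W_0$. Applying any rule a second time with the same parameters would merely duplicate labels already present on the branch, so every rule fires at most once per parameter combination. This yields a finite tableau $T$ in a number of steps bounded by a computable function of $|\Gamma|$ and $|A|$, and testing whether every maximal branch of $T$ contains a clashing pair $(C,v)$, $(\lnot C,v)$ is a finite syntactic check.

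Correctness then follows from the results already proven. If $T$ is closed, Theorem~\ref{thm:sound} together with Theorem~\ref{lemsc} gives that $A$ is a strict consequence of $\Gamma$. If some maximal branch $B$ of $T$ is open, the construction in the proof of Theorem~\ref{thm:complete} reads off from $B$ a frame $R_B$ satisfying $\Gamma \cup \{-A\}$, so $A$ is not a strict consequence of $\Gamma$. The main obstacle is the termination analysis: one must justify that restricting rule applications to labels drawn from $\Phi$ and worlds drawn from $W_0$ loses no power. This follows because an open saturated branch so restricted already witnesses satisfiability of $\Gamma \cup \{-A\}$ via the completeness construction, and by soundness this precludes the existence of any closed tableau at all — so the algorithm's verdict on the bounded search is the correct verdict on the unbounded one.
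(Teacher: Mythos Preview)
Your proposal is correct and follows essentially the same approach as the paper: build the saturated tableau for $\Gamma \cup \{-A\}$ as in the completeness proof, then output ``yes'' if it closes and ``no'' otherwise, invoking soundness and the completeness construction respectively. You supply more detail on the termination bound (pre-allocating $W_0$ and restricting cut to propositional letters of $\Phi$), whereas the paper simply points back to Theorem~\ref{thm:complete} for both the effective construction and the correctness argument, but the argument is the same.
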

\begin{proof}
Given $(\Gamma, \phi)$, we may effectively form a finite tableau $T$ for $\Gamma \cup \{-\phi\}$, as in the proof of Theorem~\ref{thm:complete}. If this tableau is closed, then $\phi$ is a strict consequence of $\Gamma$. If $T$ is not closed then, again as in the proof of Theorem~\ref{thm:complete}, $\phi$ is not a strict consequence of~$\Gamma$.
\end{proof}


\section{Two fragments}

In this section, we consider two fragments of s-logic that are of particular interest in the practice of Reverse Mathematics, and give short and natural deductive systems for these fragments.

\newcommand{\calF}{\mathcal{F}}

\begin{definition} Suppose that a set of propositional variables has been fixed. 
\begin{itemize}
\item $\calF_1$ consists of all s-formulas of the forms $X\strictif Y$ and $X \not\strictif Y$, where $X$ and $Y$ are individual propositional variables.
\item $\calF_2$ consists of all s-formulas of the forms $A \strictif Y$ and $A \not \strictif Y$, where $A$ is a nonempty conjunction of propositional variables and $Y$ is a single propositional variable. 
\end{itemize}
\end{definition}

Fragment $\calF_1$ corresponds, in a sense, to the pure implicational and nonimplicational part of s-logic, in which all propositional connectives have been removed. Fragment $\calF_2$ is motivated by results in Reverse Mathematics such as the theorem that $\mathsf{RT}^2_2$ is equivalent to $\mathsf{SRT}^2_2 + \mathsf{COH}$~\cite{CJS2001}.   It is known that $\mathsf{RT}^2_2$ implies both $\mathsf{COH}$ and $\mathsf{SRT}^2_2$, and their conjunction implies $\mathsf{RT}^2_2$, but neither $\mathsf{COH}$ nor $\mathsf{SRT}^2_2$ implies $\mathsf{RT}^2_2$. These facts can be expressed via the following s-theory in $\calF_2$:%
\newcommand{\yyskip}{\quad}%
\begin{align*}
\{ &
\mathsf{SRT}^2_2 \land \mathsf{COH} \strictif \mathsf{RT}^2_2, \yyskip
\mathsf{RT}^2_2 \strictif \mathsf{SRT}^2_2,  \yyskip
 \mathsf{RT}^2_2 \strictif \mathsf{COH},  \\ &
 \mathsf{SRT}^2_2 \not\strictif \mathsf{RT}^2_2,   \yyskip
\mathsf{COH} \not\strictif \mathsf{RT}^2_2 \}.
\end{align*}
Surveying the Reverse Mathematics literature shows that almost all published results on implications or nonimplications between subsystems can be translated into s-theories in $\calF_2$. It is thus worthwhile to consider abbreviated sets of inference rules that are sound and complete for $\calF_1$ and $\calF_2$.

We will state sound and complete deductive systems for these fragments. Such systems are particularly useful in automated theorem proving for enumerating the consequences of a given s-theory. We begin with $\calF_2$.  For notational convenience, if $A$ and $B$ are conjunctions of variables, we may write $A \land B$ for the conjunction obtained by inserting $\land$ between $A$ and $B$. 

\begin{definition}\label{rules2}
The deductive system for $\calF_2$ consists of four inference rules (I), (W), (HS), and (N).
 Intuitively, rule (W) allows for weakening of hypotheses and rule (HS) is a version of the hypothetical syllogism. 
\smallskip
\begin{center}
\begin{tabular}{rp{4in}}
I: & For any propositional variable $X$, deduce $X \strictif X$. \\
W: & From $A \strictif Y$, deduce $B\strictif Y$, where $B$ is any conjunction such that every conjunct of $A$ is also a conjunct of $B$. \\
HS: & 
From $X \land B \strictif Y$ and $A \strictif X$, deduce $A \land B \strictif Y$.\\
N: & 
From $A \not \strictif X$, $A \land Z \strictif X$, and $A \strictif Y$ for each conjunct $Y$ of $B$, deduce $B \not \strictif Z$. 
\end{tabular}
\end{center}
Each of these rules is a scheme: $A$ and $B$ may be replaced by arbitrary conjunctions of propositional variables, while $X$, $Y$, and $Z$ may be replaced by arbitrary propositional variables. In rule (HS), the conjunction $B$ may be empty.
\end{definition}

It is straightforward to verify that the rules are sound: if a frame satisfies $\Gamma$, and $\phi$ is derivable from $\Gamma$ with the rules, then the frame satisfies $\phi$. We next verify that these rules give a complete deductive system for $\calF_2$.

\begin{theorem}[Completeness for $\calF_2$]\label{completef2} Suppose that $\Gamma$ is a consistent set of s-formulas in $\calF_2$, $\phi$ is an s-formula in $\calF_2$, and every frame that satisfies $\Gamma$ satisfies~$\phi$. Then there is a derivation of $\phi$ from $\Gamma$ using the rules in Definition~\ref{rules2}.
\end{theorem}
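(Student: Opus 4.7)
The plan is to prove Theorem~\ref{completef2} by contrapositive: assuming $\phi \in \calF_2$ is not derivable from $\Gamma$ via (I), (W), (HS), (N), I will construct a frame that satisfies $\Gamma$ but not $\phi$. I identify a nonempty conjunction of propositional variables with the finite nonempty set of its conjuncts, so (W) corresponds to enlarging the set. For each such conjunction $B$, the basic building block is the \emph{canonical valuation} $w_B$ defined by $w_B(X)=T$ iff $\Gamma\vdash B\strictif X$. Rules (I) and (W) give $w_B(B)=T$; iterated (HS) shows that the set $[B]=\{X:\Gamma\vdash B\strictif X\}$ is closed under every strict implication of $\Gamma$, so $w_B$ validates all strict implications of $\Gamma$; and consistency of $\Gamma$ yields $w_B(Z)=F$ whenever $B\not\strictif Z\in\Gamma$, so $w_B$ also witnesses that strict nonimplication.

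Case~1, $\phi=A\strictif Y$: since $\Gamma\not\vdash A\strictif Y$ we have $w_A(A)=T$ and $w_A(Y)=F$, so the frame $\{w_A\}\cup\{w_B : B\not\strictif Z\in\Gamma\}$ satisfies $\Gamma$ and falsifies $A\strictif Y$.

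Case~2, $\phi=A\not\strictif Y$, is the subtle case. Its crux is a direct contrapositive reading of (N): if $B\not\strictif Z\in\Gamma$, $\Gamma\vdash B\strictif X$ for every conjunct $X$ of $A$, and $\Gamma\vdash B\land Y\strictif Z$, then instantiating (N) with the correspondence (rule's $A,X,B,Z$) $=(B,Z,A,Y)$ yields $\Gamma\vdash A\not\strictif Y$, contradicting the hypothesis; hence $\Gamma\not\vdash B\land Y\strictif Z$ whenever the first two assumptions hold. For each $B\not\strictif Z\in\Gamma$ I define a tailored valuation $w_{B,Z}$ in two sub-cases. If every conjunct of $A$ lies in $[B]$, set $w_{B,Z}(X)=T$ iff $\Gamma\vdash B\land Y\strictif X$; weakening then gives $w_{B,Z}(A)=w_{B,Z}(B)=w_{B,Z}(Y)=T$, and the contrapositive above gives $w_{B,Z}(Z)=F$. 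Otherwise, set $w_{B,Z}=w_B$, so that some conjunct of $A$ is already false in $w_{B,Z}$. In either sub-case the closure argument (applied to $B\land Y$ or to $B$) makes $w_{B,Z}$ validate every strict implication of $\Gamma$, while $w_{B,Z}(B)=T$ and $w_{B,Z}(Z)=F$ make it witness $B\not\strictif Z$. The frame $\{w_{B,Z} : B\not\strictif Z\in\Gamma\}$ thus satisfies $\Gamma$ but falsifies $A\not\strictif Y$: in the first sub-case both $A$ and $Y$ are true in $w_{B,Z}$, and in the second $A$ is false.

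The main obstacle is Case~2: using the canonical $w_B$ alone would spuriously witness strict nonimplications not entailed by $\Gamma$, so the frame would satisfy unwanted formulas. Rule (N) is designed precisely so that the contrapositive step above is available, enabling the slightly enlarged valuation $w_{B,Z}$ (which ``pretends $Y$ is true'' when $A$ is ``reachable from $B$'') to still respect $\Gamma$'s nonimplications while forcing $A\to Y$ in every world. Verifying that this modified valuation still has $Z$ false---via a single application of the contrapositive of (N)---is where essentially all the content of the argument sits.
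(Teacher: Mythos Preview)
Your proof is correct and follows essentially the same approach as the paper: both argue by contrapositive, define the canonical valuation $w_B(X)=T$ iff $B\strictif X$ is derivable, use $w_A$ (adjoined to witnesses for the nonimplications) in Case~1, and in Case~2 split each $B\not\strictif Z\in\Gamma$ according to whether every conjunct of $A$ lies in $[B]$, using $w_B$ in the negative sub-case and the enriched valuation $w_{B\land Y}$ in the positive one, with the contrapositive of rule~(N) supplying $w_{B\land Y}(Z)=F$. The only cosmetic difference is that the paper phrases the Case~2 split as ``either some valuation witnessing $B\not\strictif Z$ already makes $A$ false, or else $w_B$ shows $B\strictif X$ is derivable for every conjunct $X$ of $A$,'' which amounts to your direct split on $[B]\supseteq A$; one tiny edge case both you and the paper elide is that in Case~2 the frame $\{w_{B,Z}:B\not\strictif Z\in\Gamma\}$ may be empty, but this is harmless since any singleton frame (e.g.\ the all-true valuation) then works.
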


\begin{proof}  Working towards a contradiction, we assume there is no derivation of $\phi$ from $\Gamma$ with the stated rules. Because the rules are sound, we may thus assume that $\Gamma$ is closed under the rules and $\phi \not\in\Gamma$. The proof has two cases, depending on whether $\phi$ is a strict implication or a strict nonimplication.

\textbf{Case 1:} $\phi$ is of the form $C \strictif Z$, where $C$ is a nonempty conjunction. It suffices to construct a valuation $w_C$ that satisfies $\Gamma$ and does not satisfy $\phi$. To this end, we define a valuation
\[
w_C(X) = 
\begin{cases}
T & \text{if } C \strictif X \in \Gamma, \\
F & \text{if } C \strictif X \not \in \Gamma.
\end{cases}
\]
We must verify that $w_C$ satisfies every strict implication $U_1 \land \cdots \land U_k \strictif V$ in $\Gamma$. To do so, suppose that $w_C(U_i) = T$ for all $i \leq k$. Then, for each $i \leq k$, we have that $C \strictif U_i \in \Gamma$. Now, by applying rules (HS) and (W) repeatedly, we may derive $C \strictif V$. For example, we may first derive $C \land U_2 \land \cdots \land U_k \strictif V$ via rule (HS), then derive 
\[
U_2 \land C \land U_3 \land \cdots \land U_k \strictif V
\]
via rule (W), then derive 
 \[
 C \land C \land U_3 \land \cdots \land U_k \strictif V
  \]
via rule (HS), and continue in this way until at the end we derive $C \strictif V$ by rule~(W). Thus $w_C$ satisfies every strict implication in~$\Gamma$.  

It remains to verify that $w_C$ does not satisfy $\phi$. For each conjunct $Y$ of $C$, we may derive $C \strictif Y$ by rules (I) and (W), and thus $w_C(Y) = T$.  However, because $\phi \not \in \Gamma$, we have $w_C(Z) = F$. Thus $w_C$ is a valuation that satisfies $\Gamma$ but does not satisfy $\phi \equiv C \strictif Z$. This completes the first case, because we may adjoin $w_C$ to any frame satisfying $\Gamma$ to yield a larger frame that does not satisfy $\Gamma \cup \{\phi\}$. 

\textbf{Case 2:} $\phi$ is of the form $C \not\strictif Z$, where $C$ is a nonempty conjunction. We will build a frame that satisfies $\Gamma$ and which contains no valuation satisfying $C \not \strictif Z$.  It is sufficient to show that for each strict nonimplication $D \not\strictif Y$ in $\Gamma$ there is a valuation $w$ satisfying the strict implications of $\Gamma$ in which $w(D) = T$, $w(Y) = F$, and either $w(C) = F$ or $w(Z) = T$.  We may then take one such valuation for each strict nonimplication in $\Gamma$ to construct a frame satisfying $\Gamma$ but not $\phi$. 

We thus fix a strict nonimplication $D \not \strictif Y$ in $\Gamma$. If there is any valuation satisfying $\Gamma$ in which 
$w(D) = T$, $w(Y) = F$, and $w(C) = F$, we are done. Therefore, we may safely assume that, for each conjunct $U$ of $C$, every valuation $w$ that satisfies $\Gamma$ and has $w(D) = T$ and $w(Y) = F$ will have $w(U) = T$.  We claim that, under this assumption, we have that $D \strictif U$ is in $\Gamma$. To see this, consider
the valuation $w_D$ defined in the same way as $w_C$ from Case~1. We have
that $w_D$ satisfies every strict implication in $\Gamma$ and, for each variable $X$, $w_D(X) = T$ if and only if $D \strictif X$ is in~$\Gamma$. Because $D \not\strictif Y$ is in $\Gamma$, and $\Gamma$ is consistent, $D \strictif Y$ is not in $\Gamma$, so $w_D(Y) = F$.  Thus, under our most assumption, $w_D(U)$ must be true, which means that $D \strictif U$ is in $\Gamma$.

Now, consider the valuation $w_{D \land Z}$. We have $w_{D\land Z}(Z) = T$ and $w_{D \land Z}(D)  = T$. It follows from the previous paragraph that $w_{D \land Z}(C) = T$ as well. There are two subcases. Subcase 1: $w_{D \land Z}(Y) = F$. In this
case, $w_{D \land Z}$ satisfies $D \not\strictif Y$ but does not satisfy $C \not\strictif Z$ 
(because $Z$ is true) and we are done. Subcase 2: $w_{D\land Z}(Y) = T$. In this subcase, we have that $D \land Z \strictif Y$ is in $\Gamma$. Because we also have 
$D \not\strictif Z \in \Gamma$ and $D \strictif U \in \Gamma$ for every conjunct $U$ 
of $C$, we may apply rule~(N) to show that $C \not\strictif Z$ is in $\Gamma$, which is a contradiction.
\end{proof}

\begin{corollary} If $\Gamma$ is a consistent s-theory in $\calF_2$ and $\phi$ is in $\calF_2$ then $\phi$ is a strict consequence of $\Gamma$ if and only if $\phi$ can be derived from $\Gamma$ using the rules of Definition~\ref{rules2}.
\end{corollary}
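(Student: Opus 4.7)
The plan is to observe that this corollary is nothing more than the assembly of soundness and completeness for the deductive system of Definition~\ref{rules2}. The forward direction, that a strict consequence of $\Gamma$ in $\calF_2$ is derivable from $\Gamma$ using the four rules, is exactly the content of Theorem~\ref{completef2} (which already assumes $\Gamma$ is consistent and $\phi \in \calF_2$), so nothing further is needed on that side. The reverse direction, that derivability implies being a strict consequence, is the soundness statement flagged as ``straightforward to verify'' in the paragraph preceding Theorem~\ref{completef2}, but to make the corollary self-contained I would give a brief argument here.

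For soundness, I would proceed by induction on the length of a derivation of $\phi$ from $\Gamma$. Fix an arbitrary frame $R$ satisfying $\Gamma$; the claim is that $R$ satisfies every formula appearing in the derivation. The base case covers formulas in $\Gamma$ and is immediate. The inductive step splits into one case per rule. Rule (I) is trivial: every valuation satisfies $X \to X$. Rule (W) is immediate because if every conjunct of $A$ is a conjunct of $B$, then $w(B) = T$ forces $w(A) = T$, so $w(A) = T$ implies $w(Y) = T$ still holds. Rule (HS) is just transitivity inside each valuation: if $w(A \land B) = T$, then $w(A) = T$ gives $w(X) = T$ by $A \strictif X$, and combined with $w(B) = T$ this gives $w(X \land B) = T$, hence $w(Y) = T$ by $X \land B \strictif Y$.

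The only rule needing a little more care is (N). Suppose $R$ satisfies $A \not\strictif X$, $A \land Z \strictif X$, and $A \strictif Y$ for each conjunct $Y$ of $B$. Pick a witness $w \in R$ with $w(A) = T$ and $w(X) = F$. From $A \land Z \strictif X$ we cannot have $w(A \land Z) = T$, so $w(Z) = F$. From $A \strictif Y$ for each conjunct $Y$ of $B$, together with $w(A) = T$, we get $w(Y) = T$ for every such conjunct, hence $w(B) = T$. Thus $w$ witnesses $B \not\strictif Z$ in $R$, as required.

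The ``hard part'' of the corollary has already been done in Theorem~\ref{completef2}, which the corollary invokes as a black box; the soundness side, though case-based, presents no real obstacle and reduces to unpacking the semantics of $\strictif$ and $\not\strictif$ in each case.
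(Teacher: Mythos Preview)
Your proposal is correct and matches the paper's approach exactly: the corollary is simply the conjunction of soundness (noted as ``straightforward to verify'' before Theorem~\ref{completef2}) and completeness (Theorem~\ref{completef2}), and you have filled in the soundness verification in the natural rule-by-rule way. The paper leaves the corollary unproved precisely because it follows immediately from these two pieces, and your expanded treatment of rule~(N) is the only case requiring any care, which you handle correctly.
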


We now turn to fragment $\calF_1$. The inference rules for this fragment are simplified versions of the rules for $\calF_2$. Because hypotheses of s-formulas in $\calF_1$ are simply propositional variables, the weakening rule (W) is no longer necessary. 

\begin{definition}\label{rules1}
The deductive system for $\calF_1$ consists of the following three rules (I), (HS), and (N):
\smallskip
\begin{center}
\begin{tabular}{rl}
I: & For any propositional variable $X$, deduce $X \strictif X$. \\
HS: &  From $X  \strictif Y$ and $Y \strictif Z$, deduce $X \strictif Z$.
\\
N: & From 
$X \not\strictif Y$, $X \strictif W$, and $Z \strictif Y$, deduce $W \not\strictif Z$.
\end{tabular}
\end{center}
In each of these rules, $W$, $X$, $Y$, and $Z$ may be replaced with arbitrary propositional variables.
\end{definition}

It is straightforward to verify that these rules are sound. The completness proof is parallel to the one for~$\calF_2$.

\begin{theorem}[Completeness for $\calF_1$] Suppose that $\Gamma$ is a consistent set of s-formulas in $\calF_1$, $\phi$ is an s-formula in $\calF_1$, and every frame that satisfies $\Gamma$ satisfies~$\phi$. Then there is a derivation of $\phi$ from $\Gamma$ using the rules in Definition~\ref{rules1}.
\end{theorem}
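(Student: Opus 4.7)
The plan is to mirror the structure of the proof of Theorem~\ref{completef2}, working by contradiction: I will assume that $\Gamma$ is closed under the three rules and that $\phi \notin \Gamma$, and then exhibit a frame satisfying $\Gamma$ but not $\phi$. As in the $\calF_2$ case, the argument splits into two cases according to whether $\phi$ is a strict implication or a strict nonimplication. The principal adaptation is that in $\calF_1$ I cannot form a compound hypothesis like $D \land Z$, so in place of the auxiliary valuation $w_{D \land Z}$ used in Case~2 of Theorem~\ref{completef2}, I will use a valuation whose ``true set'' is the union of the $\Gamma$-implications out of two different single variables.

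For Case~1, where $\phi = C \strictif Z$, I define $w_C$ by $w_C(X) = T$ iff $C \strictif X \in \Gamma$, exactly as before. Rule~(I) yields $C \strictif C \in \Gamma$, so $w_C(C) = T$, while rule~(HS) shows immediately that $w_C$ satisfies every strict implication in $\Gamma$: if $U \strictif V \in \Gamma$ and $w_C(U) = T$, then $C \strictif U \in \Gamma$, and combining the two via (HS) yields $C \strictif V \in \Gamma$, i.e., $w_C(V) = T$. Since $\phi \notin \Gamma$ we have $w_C(Z) = F$, and adjoining $w_C$ to any frame satisfying $\Gamma$ produces a frame that does not satisfy $\phi$.

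For Case~2, where $\phi = C \not\strictif Z$, I will construct the desired frame by producing, for each strict nonimplication $D \not\strictif Y$ in $\Gamma$, a valuation $w$ that satisfies $\Gamma$'s strict implications and has $w(D) = T$, $w(Y) = F$, and either $w(C) = F$ or $w(Z) = T$. First I would try $w_D$ (defined as in Case~1 but relative to $D$): it satisfies the strict implications of $\Gamma$, has $w_D(D) = T$ by (I), and has $w_D(Y) = F$ by consistency. If $w_D(C) = F$, I am done; so I may assume $D \strictif C \in \Gamma$. I will then define a two-seed valuation $w_{D,Z}$ by $w_{D,Z}(X) = T$ iff $D \strictif X \in \Gamma$ or $Z \strictif X \in \Gamma$; rules~(I) and (HS) show that $w_{D,Z}$ satisfies $\Gamma$'s strict implications and has $w_{D,Z}(D) = w_{D,Z}(Z) = T$. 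If $w_{D,Z}(Y) = F$, this valuation is the required witness, since its truth of $Z$ prevents it from satisfying $C \not\strictif Z$. Otherwise $w_{D,Z}(Y) = T$, so either $D \strictif Y \in \Gamma$ (impossible by consistency) or $Z \strictif Y \in \Gamma$; in the latter event I apply rule~(N) to $D \not\strictif Y$, $D \strictif C$, and $Z \strictif Y$ to conclude $C \not\strictif Z \in \Gamma$, contradicting $\phi \notin \Gamma$.

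The main obstacle is finding the correct $\calF_1$-replacement for the conjunctive hypothesis $D \land Z$ that drove Case~2 of Theorem~\ref{completef2}. The observation that a valuation generated as the union of two single-variable closures is still closed under (HS), and that rule~(N) is exactly strong enough to dispose of the residual subcase, is the key to adapting the earlier argument. Everything else reduces to routine applications of rules~(I) and (HS) together with the consistency of $\Gamma$.
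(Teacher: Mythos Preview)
Your proposal is correct and follows essentially the same route as the paper's proof: the same contradiction setup, the same valuation $w_C$ in Case~1, and in Case~2 the same reduction to $D \strictif C \in \Gamma$ via $w_D$ followed by the two-seed valuation $w_{D,Z}(X)=T$ iff $D\strictif X\in\Gamma$ or $Z\strictif X\in\Gamma$, with rule~(N) disposing of the residual subcase. Up to the renaming $(W,X)\leftrightarrow(C,D)$, the arguments are identical.
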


\begin{proof}
The proof is parallel to the proof of Theorem~\ref{completef2}. 
As before, we assume that $\Gamma$ is closed under the deduction rules and $\phi \not\in\Gamma$. The proof again divides into two cases. The first case, when $\phi$ is a strict implication, is extremely similar to the first case of Theorem~\ref{completef2}.

For the second case, it is sufficient to show that whenever $W \not\strictif Z \not\in\Gamma$ and $X \not\strictif Y \in \Gamma$, there is a valuation satisfying all strict implications in $\Gamma$, and satisfying $X \not\strictif Y$, in which $W$ is false or $Z$ is true. We may assume without loss of generality that every valuation that satisfies the strict implications in $\Gamma$ and also satisfies $X\not\strictif Y$ must satisfy~$W$.  Then, defining the valuation $w_X$ as in Case~1 of Theorem~\ref{completef2}, we see that $w_X(W) = T$, and thus $X\strictif W$ is in $\Gamma$.

Now consider the following valuation:
\[
w_{X,Z}(U) = 
\begin{cases}
T & \text{if $X \strictif U \in \Gamma$ or $Z \strictif U \in \Gamma$,}\\
F & \text{otherwise.}
\end{cases}
\]
We first verify that $w_{X,Z}$ satisfies each strict implication $U \strictif V$ in $\Gamma$.
If $w_{X,Z}(U) = T$, then either $X \strictif U \in \Gamma$ or 
$Z \strictif U \in \Gamma$. Then, because $\Gamma$ is closed under rule (HS), we have $X \strictif V$ or $Z \strictif V$ is in~$\Gamma$, respectively. 
Thus $w_{X,Z}(U \to V) = T$, as desired. Hence $w_{X,Z}$ satisfies all strict implications in~$\Gamma$.

Now we have $w_{X,Z}(X) = T$, $w_{X,Z}(Z) = T$, and $w_{X,Z}(W) = T$ because $X \strictif W \in \Gamma$.  If $w_{X,Z}(Y) = F$ then we are done. We show that this must happen by assuming that $w_{X,Z}(Y) = T$. 
Then either $X \strictif Y \in \Gamma$ or $Z \strictif Y \in \Gamma$. The former is impossible because $X \not\strictif Y \in \Gamma$ and $\Gamma$ is consistent. Thus $Z \strictif Y \in \Gamma$. But we also have $X \not \strictif Y \in\Gamma$ and $X \strictif W \in \Gamma$, so we may derive $W \not \strictif Z \in \Gamma$ by rule (N). This is a contradiction. Subcase 2: $w_{X,Z}(Y) = F$. Then 
$w_{X,Z}$ is the desired valuation. 
\end{proof}

\begin{corollary} If $\Gamma$ is a consistent s-theory in $\calF_1$ and $\phi$ is in $\calF_1$ then $\phi$ is a strict consequence of $\Gamma$ if and only if $\phi$ can be derived from $\Gamma$ using the rules of Definition~\ref{rules1}.
\end{corollary}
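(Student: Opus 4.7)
The plan is to deduce this corollary immediately from the preceding completeness theorem together with a direct verification that the three rules of Definition~\ref{rules1} are sound. For the ``only if'' direction (strict consequence implies derivability), the hypotheses are exactly what the preceding completeness theorem requires, so a derivation exists at once. For the ``if'' direction (derivability implies strict consequence) I would proceed by induction on the length of a derivation, checking that each of rules (I), (HS), and (N), when its premises are satisfied in a frame $R$, forces its conclusion to be satisfied in $R$ as well.

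The soundness check is routine. Rule (I) holds because $w(X \to X) = T$ for every valuation $w$, so every frame satisfies $X \strictif X$. Rule (HS) is the transitivity of $\to$ applied pointwise to the valuations in $R$: if $w(X \to Y) = T$ and $w(Y \to Z) = T$, then $w(X \to Z) = T$. The only rule worth spelling out is (N): assuming $R$ satisfies $X \not\strictif Y$, $X \strictif W$, and $Z \strictif Y$, I pick a $w \in R$ witnessing $X \not\strictif Y$, so that $w(X) = T$ and $w(Y) = F$. Then $X \strictif W$ forces $w(W) = T$, and $Z \strictif Y$ together with $w(Y) = F$ forces $w(Z) = F$. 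Hence the same $w$ witnesses $W \not\strictif Z$ in $R$.

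There is no real obstacle. The corollary is essentially a bookkeeping combination of soundness and the completeness theorem just proved; the earlier remark in the text that the rules are ``straightforward to verify'' refers precisely to the soundness argument sketched above.
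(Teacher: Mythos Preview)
Your proposal is correct and matches the paper's intended argument: the corollary is stated without proof in the paper, being an immediate consequence of the soundness remark preceding the completeness theorem together with that theorem itself. Your explicit verification of soundness for rules (I), (HS), and (N) is exactly the ``straightforward'' check the paper alludes to, and the soundness argument for rule~(N) is correct.
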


\bibliographystyle{amsplain}
\providecommand{\bysame}{\leavevmode\hbox to3em{\hrulefill}\thinspace}
\providecommand{\MR}{\relax\ifhmode\unskip\space\fi MR }
\providecommand{\MRhref}[2]{%
  \href{http://www.ams.org/mathscinet-getitem?mr=#1}{#2}
}
\providecommand{\href}[2]{#2}

\end{document}